\documentclass[12pt,a4paper,withmarginpar]{article}

%% Ranguage and font encodings
%\usepackage[polish]{babel}
\usepackage[utf8x]{inputenc}
\usepackage[T1]{fontenc}
%% Sets page size and margins
\usepackage[a4paper,top=3cm,bottom=3cm,left=3cm,right=3cm,marginparwidth=1.75cm]{geometry}

%% Useful packages

%\usepackage{ntheorem}
\usepackage{verbatim}
\usepackage{enumitem}
\input{xy}
\xyoption{all}
\usepackage[all]{xy} %%rysowanie diagramow
\usepackage{amsmath}
\usepackage{graphicx}
\usepackage{mathtools}
\usepackage[colorinlistoftodos]{todonotes}
\usepackage[colorlinks=true, allcolors=blue]{hyperref}
\usepackage{amssymb}
\usepackage{amsthm}
\theoremstyle{plain}
\newtheorem{thm}{\textsf{\textbf{Theorem}}}[section]
\newtheorem{lem}[thm]{\textsf{\textbf{Lemma}}}

\newtheorem*{thm*}{\textsf{\textbf{Theorem}}}
\newtheorem*{prop*}{\emph{Proposition}}
\theoremstyle{definition}
\newtheorem{dfn}[thm]{\textbf{\textsf{Definition}}}

\newcommand{\Oee }{\mathcal O}

\newcommand{\nat }{\mathbb N}
\newcommand{\real }{\mathbb R}

\DeclareMathOperator{\inter}{Int}

\DeclareMathOperator{\cl}{Cl}
\newcommand{\fr}{\partial}

\DeclareMathOperator{\id}{id}

\title{On Cartwright-Littlewood Fixed Point Theorem}
\author{Przemysław Kucharski\footnote{AGH University of Science and Technology, Faculty of Applied Mathematics, al. Mickiewicza 30, 30-059 Krak\'{o}w, Poland, e-mail: pkuchars@agh.edu.pl, ORCID iD: \url{https://orcid.org/0000-0002-3826-5827} 
}}
%% Moje komendy

\newcommand{\clu}[1]{\bar{#1}}
\newcommand{\orbp}[1]{\mathcal{O}_{+}(#1)}
\newcommand{\orbm}[1]{\mathcal{O}_{-}(#1)}
\newcommand{\orbpa}[2]{\mathcal{O}_{+}^{#2}(#1)}
\newcommand{\orbma}[2]{\mathcal{O}_{-}^{#2}(#1)}
\newcommand{\plane}[0]{\real^{2}}

%% header
%\pagestyle{headings}
\providecommand{\keywords}[1]
{
	\small	
	\textbf{\textit{Keywords---}} #1
}
\providecommand{\declarationofinterest}[1]
{
	\small	
	\textbf{\textit{Declaration of interest---}} #1
}
\begin{document}
		\maketitle
\begin{abstract}
	We prove the following generalization of the Cartwright-Littlewood fixed point theorem. Suppose $ h\colon~\real^{2}\to\real^{2} $ is an orientation preserving planar
	homeomorphism, and $ X $ is an acyclic continuum. Let $ C $ be a component of $ X ∩ h(X) $. If there is a $ c ∈ C $ such that $ \orbp c ⊆ C $ or
	$ \orbm c ⊆ C $ then $ C $ also contains a fixed point of $ h$. Our result also generalizes earlier results of Ostrovski and Boro\'nski, and answers the Question from \cite{boronski_2017}. The proof is inspired by a short proof of the result of  Cartwright and Littlewood due to Hamilton \cite{hamilton_1954}. 
\end{abstract}
\keywords{Cartwright-Littlewood Theorem, fixed point theory, acyclic continua.}\\
\declarationofinterest{Author declares that there are no financial or personal relationships with other people or organizations that could inappropriately influence this work.}
\section{Introduction}
In 1951 Cartwright and Littlewood \cite{cartwright-littlewood} proved that an orientation preserving planar homeomorphism has a fixed point in every acyclic invariant continuum. Short proofs were later given by Hamilton \cite{hamilton_1954}, and Brown \cite{Brown1977ASS}. There is a number of generalizations of the theorem of Cartwright and Littlewood. Bell showed in \cite{Bell1978AFP} that the theorem is true for orientation reversing planar homeomorphisms. His result was further generalized to the plane separating continua by Kuperberg \cite{kuperberg,kuper2} and Boro\'nski \cite{boronski-fixedpointsandperiodicpoints}. More general results, without the assumption that $h$ is a homeomorphism, were obtained in \cite{BlokhMAMS2013}. Yet, we still do not know whether every acyclic planar continuum has the fixed point property \cite[Problem 107]{bookscottish}.

Among generalizations of the Cartwright-Littlewood Theorem there are results of Ostrovski \cite{OSTROVSKI2013915}. We recall three of them, each have its own connection to the result in this paper.
\begin{thm}[{\cite[Thm. 1.1]{OSTROVSKI2013915}}]\label{thmB}
	Let $X\subset \real^{2}$ be a compact, simply connected, locally connected subset of the real plane and let $f\colon X\to Y\subset \real^{2}$ be a homeomorphism isotopic to the identity on $X$. Let $C$ be a connected component of $X\cap Y$. If $f$ has a periodic orbit in $C$, then $f$ also has a fixed point in $C$.
\end{thm}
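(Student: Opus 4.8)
The plan is to reduce the statement to a Cartwright--Littlewood type theorem for a genuinely invariant continuum, reached through prime-end theory. First I would pass from the partially defined $f$ to a global map. Since $X$ is compact, simply connected and locally connected, it is a planar absolute retract, hence a cellular subset of $\sphere{2}=\plane\cup\{\infty\}$; the isotopy joining $\id_X$ to $f$ can therefore be promoted to an ambient isotopy, yielding an orientation preserving homeomorphism $h\colon\plane\to\plane$ with $h|_X=f$ and $h(X)=Y$, so that the periodic orbit of $f$ in $C$ becomes a periodic orbit of $h$ and $C$ is a component of $X\cap h(X)$. Next I would record the topology of $C$. Because $X$ and $h(X)$ are simply connected, each has connected complement in $\sphere{2}$, and I claim every component $C$ of $X\cap h(X)$ then has connected complement as well: if $C$ separated $\sphere{2}$, any bounded complementary domain $D$ would be disjoint from the connected sets $\sphere{2}\setminus X$ and $\sphere{2}\setminus h(X)$ (both containing $\infty$, hence lying in the unbounded complementary domain of $C$), forcing $\clu{D}\subseteq X\cap h(X)$; connectivity of $\clu{D}$ and $\clu{D}\cap C\neq\emptyset$ would place $\clu{D}$ inside the component $C$, contradicting $D\cap C=\emptyset$. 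Thus $U:=\sphere{2}\setminus C$ is simply connected, and I may form its prime-end compactification $\hat U\cong\overline{\mathbb D}$ with circle of prime ends $\Pee$.

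The core of the argument, following Hamilton, is to make $h$ act on $\Pee$ and to read off a fixed prime end. The difficulty is that $h$ does not preserve $C$: one only has $h(C)\cap C\neq\emptyset$ (the orbit lies in both $C$ and $h(C)$), so that $C,h(C),h^{-1}(C),\dots$ form a bi-infinite chain with consecutive overlaps. I would exploit this overlap structure together with the isotopy $\id_X\simeq f$ to define an orientation preserving circle homeomorphism $\hat h\colon\Pee\to\Pee$ and a preferred lift to $\real$, and then compute its rotation number $\rho$. Transporting the periodic orbit to the boundary through principal points shows that $\rho$ is rational; the isotopy to the identity supplies the canonical lift relative to which the net rotation contributed by this orbit vanishes, forcing $\rho=0$ and hence producing a fixed prime end $\mathfrak{e}$. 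I expect this step — inducing $\hat h$ in spite of the non-invariance of $C$, and proving $\rho=0$ rather than merely rational — to be the \emph{main obstacle}, since a nonzero rational $\rho$ would only yield a periodic point, whereas the theorem demands a genuine fixed point.

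Finally I would convert the fixed prime end into the desired fixed point. Its impression $I(\mathfrak{e})$ is an $h$-invariant subcontinuum of $\partial U=C$; by the standard Cartwright--Littlewood and Hamilton analysis of impressions and principal sets, $h$ has a fixed point in $I(\mathfrak{e})\subseteq C$ — immediately if the impression is degenerate, and otherwise by applying the classical Cartwright--Littlewood theorem to the invariant acyclic continuum $I(\mathfrak{e})$. Such a fixed point of $h$ lies in $X$ and coincides with its own image under $f$, so it lies in $X\cap Y$, and being in $C$ it is precisely the fixed point of $f$ in $C$ asserted by the theorem.
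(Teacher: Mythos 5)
This statement is quoted by the paper from Ostrovski without proof, so your attempt can only be measured against what a correct proof requires --- and there it has a genuine gap at its center. The pivotal step of your outline, inducing an orientation preserving homeomorphism of the circle of prime ends of $U=\sphere{2}\setminus C$ and showing its rotation number vanishes, is not merely difficult: as stated it is ill-posed. Since $C$ is only a component of $X\cap Y$, it is not $h$-invariant (one has only $h(C)\cap C\neq\emptyset$), so $h$ does not map $U$ to $U$ and there is no induced action whatsoever on the prime-end compactification of $U$ --- hence no lift, no rotation number, and no fixed prime end. You acknowledge this yourself, calling the construction of $\hat h$ in spite of non-invariance ``the main obstacle,'' but you offer no construction; nothing in the overlap chain $\dots,h^{-1}(C),C,h(C),\dots$ canonically produces a self-map of a single prime-end circle. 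The final step inherits the same defect: speaking of an ``$h$-invariant impression'' presupposes exactly the prime-end action that does not exist, and moreover impressions need not be acyclic, so even the appeal to classical Cartwright--Littlewood there would require a separate argument.

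A second, smaller gap is the opening reduction: promoting the isotopy from $\id_X$ to $f$ to an ambient isotopy, so as to obtain $h\colon\plane\to\plane$ with $h|_{X}=f$, is a real theorem about isotopy extension for planar compacta that must be proved or cited; cellularity of $X$ alone does not deliver it. Interestingly, if that extension step is granted, you need no prime ends at all: a periodic orbit in $C$ is in particular a forward orbit contained in $C$, so the paper's Theorem \ref{main} applies directly and gives a fixed point of $h$ in $C$, which lies in $X$ and is therefore a fixed point of $f$. The first proof of Theorem \ref{main} is Hamilton-style --- via Lemma \ref{keylemma}, Lemma \ref{lemma-main} and the Brouwer translation theorem --- and is logically independent of Ostrovski's results, so no circularity arises. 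That mechanism, extending $h|_{R\cap h^{-1}(R)}$ to a plane homeomorphism with no fixed points outside $R\cap h^{-1}(R)$, is precisely the device that circumvents the non-invariance of $C$ on which your prime-end approach founders; I would recommend rebuilding your argument around it rather than around a rotation number that cannot be defined.
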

\begin{thm}[{\cite[Thm. 2.7]{OSTROVSKI2013915}}]\label{thmC}
	Let $D\subset\real^{2}$ be a Jordan domain and $f\colon D\to E\subset \real^{2}$ be an orientation preserving homeomorphism. Let $C$ be a connected component of $D\cap E$. If $f$ has a periodic orbit in $C$, then $f$ also has a fixed point in $C$.
\end{thm}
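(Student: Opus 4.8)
The plan is to transfer the problem to the sphere, recognise $C$ as an acyclic continuum, and then run a prime-end argument in the spirit of Hamilton \cite{hamilton_1954}, using the periodic orbit to pin down the prime-end rotation number. First I would extend $f$ to a global homeomorphism. Since $D$ and $E=f(D)$ are Jordan domains, $\fr D$ and $\fr E$ are Jordan curves and $f|_{\fr D}\colon\fr D\to\fr E$ is a homeomorphism; by the Schoenflies theorem this extends to an orientation preserving homeomorphism $F\colon\sphere2\to\sphere2$ with $F|_{D}=f$. Because $C\subseteq D$, every fixed point of $F$ lying in $C$ is automatically a fixed point of $f$ in $C$, so it suffices to produce a fixed point of $F$ inside $C$.

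Second, I would show that $C$ is a non-separating, hence acyclic, continuum. Fixing $\infty\in\sphere2\setminus\real^2$, both $\sphere2\setminus D$ and $\sphere2\setminus E$ are connected and contain $\infty$, so they lie in a single complementary component of $C$. Therefore any other complementary component $U$ of $C$ satisfies $U\subseteq D\cap E$, whence $C\cup\cl U$ is a connected subset of $D\cap E$ strictly larger than $C$, contradicting the maximality of the component $C$. Thus $W=\sphere2\setminus C$ is connected and $C$ is acyclic, which is exactly the setting in which Carathéodory prime-end theory applies to the simply connected domain $W$.

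Third is the dynamical core. Assume for contradiction that $F$ has no fixed point in $C$. The periodic orbit $\{c,f(c),\dots,f^{p-1}(c)\}\subseteq C$ is then a periodic orbit of $F$, so by the Brouwer plane translation theorem $F$ cannot be fixed-point free, and, more to the point, such an orbit must enclose a fixed point. I would make this quantitative through the prime-end rotation number of the boundary circle of $W$: imitating Hamilton's treatment of the invariant case \cite{hamilton_1954,cartwright-littlewood}, the acyclicity of $C$ together with the trapped periodic orbit should force this rotation number to be rational with a fixed (or periodic) prime end, whose impression is a subcontinuum of $\fr C\subseteq C$ on which a fixed-point-index computation produces a fixed point of $F$ — the desired contradiction.

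The main obstacle is precisely that $C$ is \emph{not} $f$-invariant: unlike in the classical Cartwright--Littlewood situation, $f$ does not map $W=\sphere2\setminus C$ to itself, so the induced circle homeomorphism and its rotation number are not defined in the naive way. This is exactly where the hypothesis that $C$ is a component of $D\cap E$ must enter. The frontier of $C$ decomposes into arcs lying on $\fr D$ and arcs lying on $\fr E=f(\fr D)$, and I would exploit this alternating structure, together with the fact that the orbit never leaves $C$, to manufacture a well-defined boundary map (or, equivalently, a nested family of disks engulfing $C$) to which the rotation-number argument can be applied. Carrying out this boundary bookkeeping rigorously, and verifying that the fixed prime end it yields genuinely lands in $C$ rather than merely in $\fr D\cup\fr E$, is the delicate step on which the whole proof turns.
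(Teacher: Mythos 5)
Your two preparatory steps are sound and match the reduction one would want: the Schoenflies extension of $f$ to an orientation preserving homeomorphism $F$ of $\sphere{2}$ (equivalently of $\plane$) agreeing with $f$ on $D$ is correct, and your maximality argument showing that every complementary component of $C$ other than the one containing $\infty$ would be swallowed into $D\cap E$, so that $C$ is an acyclic continuum, is also correct. At that point you are exactly in the hypotheses of Theorem \ref{main} with $X=D$ (a periodic orbit in $C$ gives $\orbp{c}\subseteq C$). But the core of your argument --- producing the fixed point --- is a genuine gap, and you essentially say so yourself. The prime-end rotation number of $\fr(\sphere{2}\setminus C)$ is simply not defined here, because $C$ is not $F$-invariant, and your proposed repair (``manufacture a well-defined boundary map'') is a gesture, not a construction; moreover even in the invariant case a fixed or periodic prime end does not by itself hand you a fixed point in the impression without substantial extra work. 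Your appeal to the Brouwer translation theorem as stated is also vacuous for the purpose at hand: it shows $F$ has a fixed point \emph{somewhere} in the plane, but the Schoenflies extension will typically have fixed points outside $D$, so this locates nothing in $C$. (A side remark: Hamilton's short proof \cite{hamilton_1954} is not a prime-end argument; the prime-end route belongs to the original Cartwright--Littlewood approach.)

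The mechanism that actually closes this gap --- and note that the paper does not reprove Theorem \ref{thmC} directly, but its Theorem \ref{main} subsumes it --- is surgery rather than rotation numbers. One takes nested closed discs $R_{n}$ shrinking to $C$ (Lemma \ref{keylemma}; in your situation $X=D$ is already a disc, so this is easy), and then, via the Hamilton--Newman construction of Lemma \ref{lemma-main}, replaces $h$ outside $R_{n}\cap h^{-1}(R_{n})$ by an orientation preserving homeomorphism $H_{n}$ of the whole plane that agrees with $h$ on $R_{n}\cap h^{-1}(R_{n})$ and has \emph{no} fixed points outside it. Since the periodic orbit lies in $C\subseteq R_{n}\cap h^{-1}(R_{n})$, it is a bounded orbit of $H_{n}$, so by Brouwer's translation theorem \cite{Brouwer1912} $H_{n}$ has a fixed point $w_{n}$, which is forced into $R_{n}\cap h^{-1}(R_{n})$, where $H_{n}=h$; an accumulation point of $\{w_{n}\}$ is then a fixed point of $h$ in $\bigcap_{n}R_{n}=C$. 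The ``delicate boundary bookkeeping'' you defer --- the alternation of arcs of $\fr D$ and $\fr E$ along $\fr C$ --- is precisely what the arcs $L_{ri}$, $A_{ri}$ in Lemma \ref{lemma-main} organize, but in the service of killing fixed points outside the overlap region, not of defining a circle map. Without this surgery step (or an equivalent, such as Ostrovski's own argument via Theorem \ref{thmD}), your proposal is a correct reduction followed by an unproved core, and does not yet constitute a proof.
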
At last, as a corollary to Theorem \ref{thmB}, Ostrovski obtained slightly stronger result, which we are going to use later on, that is
\begin{thm}[{\cite[Cor. 1.2]{OSTROVSKI2013915}}]\label{thmD}
	Let $X\subset\real^{2}$ and $f\colon X\to Y$ be as in Theorem \ref{thmB} and let $C$ be a connected component of $X\cap Y$. If $f$ has no fixed point in $C$, then the orbit of every point $x\in C $ eventually leaves $C$, i.e., there exists $n(x)\in\nat$ such that $f^{n}(x)\notin C$.
\end{thm}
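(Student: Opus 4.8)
The plan is to argue by contraposition: I assume that $f$ has no fixed point in $C$ and show that the forward orbit of every $x \in C$ must leave $C$. Equivalently, I suppose toward a contradiction that some $x \in C$ satisfies $f^{n}(x) \in C$ for all $n \in \nat$, and I try to manufacture a fixed point of $f$ inside $C$, which Theorem \ref{thmB} will let me do once I have a periodic orbit there. A useful preliminary observation is that $C$ is compact: $X$ is compact and $f$ is a homeomorphism, so $Y$ is compact, $X \cap Y$ is compact, and a connected component of a compact metric space is closed. Thus the forward orbit $\orbp x$ is a finite or infinite subset of the continuum $C$.

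If $\orbp x$ is finite then, since $f$ is injective, $x$ is periodic and $\orbp x$ is a periodic orbit lying in $C$; Theorem \ref{thmB} immediately supplies a fixed point in $C$, contradicting our assumption. The real work is therefore the case of an infinite orbit. Here compactness of $C$ gives a nonempty $\omega$-limit set $\omega(x) \subseteq C$; because $f$ is a homeomorphism and the tail of the orbit is confined to the compact set $C$, one checks in the usual way that $f(\omega(x)) = \omega(x)$ and that every point of $\omega(x)$ is non-wandering for $f$. So the obstruction to escaping is encoded in a nonempty compact invariant set of recurrent behaviour sitting inside $C$.

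To exploit the planar geometry I would extend $f$, using the hypothesis that it is isotopic to the identity on the compact, simply connected, locally connected set $X$, to an orientation-preserving homeomorphism $F$ of $\real^{2}$ isotopic to the identity and agreeing with $f$ along the orbit of $x$. The Brouwer plane translation theorem says that an orientation-preserving homeomorphism of the plane with a non-wandering point must have a fixed point, so the recurrent point found above already forbids $F$ from being fixed-point-free. The difficulty is that Brouwer's theorem by itself only locates a fixed point somewhere in the plane, with no guarantee that it lies in the component $C$; to place it in $C$ I must instead produce a genuine periodic orbit inside $C$ and invoke Theorem \ref{thmB}.

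I expect this last bridge to be the main obstacle. Passing from a non-wandering (merely recurrent) point to an honest periodic orbit is not automatic, and must be powered by the special structure at hand: either by showing that, under these hypotheses, a bounded non-wandering orbit with no fixed point in sight forces a periodic orbit within $C$, or by running the Brouwer translation argument in a localized form that pins its fixed point to $C$. Controlling the location of the fixed point is exactly where the isotopy-to-identity hypothesis and the simple connectivity of $X$ should enter, and reconciling that localization with the component $C$ so that Theorem \ref{thmB} applies is the delicate step I would expect to occupy the bulk of the argument.
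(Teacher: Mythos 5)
There is a genuine gap, and you have in fact named it yourself: the bridge from a recurrent (non-wandering) point in $C$ to a periodic orbit in $C$ does not exist, and no hypothesis of the theorem supplies it. The $\omega$-limit set of a bounded orbit of a planar homeomorphism can perfectly well be a minimal Cantor set (odometer-like dynamics) or another aperiodic minimal set: every point of it is recurrent, none is periodic, and it need contain no fixed point. In that scenario Theorem \ref{thmB} is simply inapplicable, so the contradiction you want never materializes, and your finite-orbit case plus the unexecuted ``delicate step'' do not add up to a proof. Note also that this statement is quoted in the paper from Ostrovski \cite[Cor.\ 1.2]{OSTROVSKI2013915} without proof, so the relevant comparison is with Ostrovski's argument and with the closely analogous machinery the paper builds for its own main theorem.

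The workable route is the one you mention only as a fallback and then set aside: do not hunt for a periodic orbit at all, but localize the Brouwer translation theorem by controlling where fixed points of an extension can live. Concretely --- and this is exactly what the paper's Lemma \ref{lemma-main} does for Hamilton's method --- one extends $f$ near $C$ to an orientation preserving homeomorphism $F$ of $\plane$ that agrees with $f$ on $C\cap f^{-1}(C)$ (or on a slightly larger disc-like neighbourhood) and has \emph{no fixed points outside} that set; the hypotheses on $X$ (compact, simply connected, locally connected, $f$ isotopic to the identity) are what make such a controlled extension possible. Then the logic runs in the opposite direction from yours: if $f$ has no fixed point in $C$, the extension $F$ is fixed point free on all of $\plane$, so by Brouwer's translation theorem every point of $\plane$ is wandering and no orbit can remain in the compact set $C$ forever --- which is precisely the conclusion of Theorem \ref{thmD}, with no periodic orbit ever needed. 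Your proposal identifies all the right ingredients (compactness of $C$, the extension, Brouwer), but assembles them in an order that forces you through the recurrent-to-periodic step, which is false in general; the correct assembly makes that step unnecessary.
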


In 2015 Boroński \cite{boronski_2017} developed ideas in \cite{Brown1977ASS} and proved the following
\begin{thm}[{\cite[Theorem 1.1]{boronski_2017}}]\label{boronski}
	Let $h\colon \real^{2}\to\real^{2}$ be an orientation preserving planar homemorphism, and let $C$ be a continuum such that $h^{-1}(C)\cup C$ is acyclic. If there is a point $c\in C$ such that $\orbp{c}\subset C$ or $\orbm{c}\subset C$, then $C$ also contains a fixed point of $h$.
\end{thm}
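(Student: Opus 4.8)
The plan is to squeeze the acyclic continuum $K:=h^{-1}(C)\cup C$ between nice topological disks, run Ostrovski's Theorem~\ref{thmD} on each disk, and extract a fixed point of $h$ inside $C$ by a compactness argument. First I would reduce to the forward-orbit case. If instead $\orbm{c}\subseteq C$, put $g=h^{-1}$: this is again an orientation preserving planar homeomorphism, $\orbp{c}\subseteq C$ holds for $g$, and since $h$ carries the acyclic continuum $h^{-1}(C)\cup C$ homeomorphically onto $C\cup h(C)=g^{-1}(C)\cup C$, the latter is acyclic as well. As a fixed point of $g$ is a fixed point of $h$, treating $\orbp{c}\subseteq C$ suffices.

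Since $K$ is acyclic it does not separate the plane, hence it is cellular, so I can fix a nested sequence of closed topological disks $D_{1}\supseteq D_{2}\supseteq\cdots$ with $\bigcap_{n}D_{n}=K$. Each $D_{n}$ is compact, simply connected and locally connected. Because every orientation preserving homeomorphism of $\real^{2}$ is isotopic to the identity, the restriction $h|_{D_{n}}\colon D_{n}\to h(D_{n})$ satisfies the hypotheses of Theorem~\ref{thmD}. From $h^{-1}(C)\subseteq K\subseteq D_{n}$ and $C\subseteq K\subseteq D_{n}$ I obtain $C\subseteq D_{n}\cap h(D_{n})$; let $C_{n}$ be the component of $D_{n}\cap h(D_{n})$ that contains the continuum $C$. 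The forward orbit of $c$ lies in $C\subseteq C_{n}$, so the $h|_{D_{n}}$-orbit of $c$ never leaves $C_{n}$; by the contrapositive of Theorem~\ref{thmD}, $h$ then has a fixed point $p_{n}\in C_{n}\subseteq D_{n}$.

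Passing to a convergent subsequence gives $p_{n}\to p$; as the tail $\{p_{n}:n\ge m\}$ lies in the closed set $D_{m}$ for each $m$, the limit satisfies $p\in\bigcap_{m}D_{m}=K$ and $h(p)=p$. The step I expect to look like the real obstacle—forcing this limiting fixed point to lie in $C$ rather than merely in $K$—turns out to be free, thanks to the shape of $K$: if $p\in h^{-1}(C)$ then $p=h(p)\in C$, so in either case $p\in C$, which is exactly the desired conclusion. The genuine technical care is therefore concentrated earlier, in the cellularity step (producing the disks $D_{n}$ with the simple and local connectivity that Ostrovski's theorem demands) and in the claim that $h|_{D_{n}}$ is isotopic to the identity; granting these, the limiting argument closes the proof.
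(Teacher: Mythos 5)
Your argument is correct, and its engine is the same one the paper uses: the statement you proved (Theorem \ref{boronski}) is only quoted in the paper, but it is subsumed by Theorem \ref{main}, whose \emph{second} proof is precisely your scheme --- nest closed discs around an acyclic continuum, apply Ostrovski's Theorem \ref{thmD} on each disc (justified, exactly as you do, by the fact that an orientation preserving planar homeomorphism restricted to a closed disc is isotopic to the identity), extract a convergent sequence of fixed points, and reduce the backward-orbit case to the forward one by passing to $h^{-1}$. The one genuine structural difference is how the limit fixed point is forced into $C$. The paper squeezes its discs onto the component $C$ itself: Lemma \ref{keylemma} produces the closures $R_{n}$ of the components of $\inter K_{n}\cap h(\inter K_{n})$ containing $C$, with $\bigcap_{n}R_{n}=C$, so the limit lands in $C$ automatically. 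You instead squeeze onto $K=h^{-1}(C)\cup C$ and exploit its two-piece shape: a fixed point lying in $h^{-1}(C)$ equals its own image and hence lies in $C$. Your shortcut is cleaner, but note that it is available only because in Boro\'nski's formulation $C$ is itself a continuum and the acyclic set has this special form; in the paper's Theorem \ref{main}, where $C$ is merely one component of $X\cap h(X)$, a fixed point known only to lie in $\bigcap_{n}D_{n}=X$ need not lie in $C$, and Lemma \ref{keylemma} is exactly the device that repairs this --- so your argument does not literally generalize, which is presumably why the paper organizes the proof around that lemma. Two small points you should make explicit: the iterates $h^{k}(c)$ stay in $C\subset D_{n}$, so the orbit of $c$ under $h|_{D_{n}}$ in the sense of Theorem \ref{thmD} is well defined; and the inclusion $C\subset h(D_{n})$ comes from $h^{-1}(C)\subset K\subset D_{n}$, which is precisely where the hypothesis that $h^{-1}(C)\cup C$ (rather than $C$ alone) is acyclic enters.
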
He also asked the following \cite[Question]{boronski_2017}. 

\noindent
{\bfseries Question.} {\itshape Suppose $h : \mathbb{R}^2 \to  \mathbb{R}^2$ is an orientation preserving planar homeomorphism and $X$ is an acyclic continuum. Let $C$ be a component of $X\cap h(X)$. If there is a $c\in C$ such that $\{h^{-i}(c):i\in\mathbb{N}\}\subseteq C$ or $\{h^{-i}(c):i\in\mathbb{N}\}\subseteq C$ must $C$ also contain a fixed point of $h$?}

In this paper we are going to show that we can modify the approach of Hamilton \cite{hamilton_1954} (which in fact was heavily based on the unpublished lemma proved by Newman) to answer Boroński's question in the affirmative. That is, we are going to prove the following result.
\begin{thm}\label{main}
	Suppose $ h\colon~\real^{2}\to\real^{2} $ is an orientation preserving planar
	homeomorphism and $ X $ is an acyclic continuum. Let $ C $ be a component of $ X ∩ h(X) $. If there is a $ c ∈ C $ such that $ \orbp c ⊆ C $ or
	$ \orbm c ⊆ C $ then $ C $ also contains a fixed point of $ h. $
\end{thm}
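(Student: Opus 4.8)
The plan is to derive the theorem from Ostrovski's Theorem~\ref{thmD} by squeezing the acyclic continuum $X$ between a decreasing sequence of closed topological disks and then letting the disks shrink down onto $X$.

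First I would reduce to the forward case. If only $\orbm c\subseteq C$ is assumed, put $g=h^{-1}$ and $X'=h(X)$: then $g$ is an orientation preserving planar homeomorphism, $X'$ is again an acyclic continuum (acyclicity is preserved by the ambient homeomorphism $h$), and $g(X')=X$, so $X'\cap g(X')=X\cap h(X)$ and $C$ is still one of its components, while now the forward $g$-orbit of $c$ lies in $C$. Since a fixed point of $g$ is a fixed point of $h$, it is enough to treat the case $\orbp c\subseteq C$.

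Because $X$ is acyclic it does not separate the plane, so by Alexander duality $\sphere{2}\setminus X$ is an open disk; exhausting it by nicely embedded closed disks (each containing $\infty$) and taking complements produces closed topological disks $D_1\supseteq D_2\supseteq\cdots$ in $\real^{2}$ with $X\subseteq D_n$ and $\bigcap_n D_n=X$. Each $D_n$ is compact, simply connected and locally connected, $h(D_n)$ is again a closed disk, and $h|_{D_n}$ is isotopic to the identity on $D_n$ because the group of orientation preserving homeomorphisms of $\real^{2}$ is path connected; hence the pair $(D_n,h|_{D_n})$ satisfies the hypotheses of Theorems~\ref{thmB} and~\ref{thmD}. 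Let $C_n$ be the component of $D_n\cap h(D_n)$ containing the connected set $C\subseteq X\cap h(X)\subseteq D_n\cap h(D_n)$. The $C_n$ form a nested sequence of continua whose intersection is a subcontinuum of $\bigcap_n\bigl(D_n\cap h(D_n)\bigr)=X\cap h(X)$ containing $C$; by maximality of the component $C$ this intersection equals $C$.

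Now the argument closes quickly. Because $\orbp c\subseteq C\subseteq C_n$, the forward orbit of $c$ never leaves $C_n$, so the contrapositive of Theorem~\ref{thmD} applied to $(D_n,h|_{D_n},C_n)$ yields a fixed point of $h$ in $C_n$. The sets $\Fix(h)\cap C_n$ are therefore nonempty, compact and nested, and by the finite intersection property $\Fix(h)\cap C=\bigcap_n\bigl(\Fix(h)\cap C_n\bigr)\neq\emptyset$, which is exactly the assertion of the theorem. I expect the main obstacle to be the passage from $X$ to the disks $D_n$: one must guarantee both that the approximating pairs genuinely satisfy the hypotheses of Theorem~\ref{thmD} (in particular the isotopy-to-the-identity condition) and that the components $C_n$ collapse onto $C$ and nothing larger, which is where acyclicity of $X$ and the maximality of the component are really used.
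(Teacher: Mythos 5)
Your proposal is correct and coincides in essence with the paper's second proof of Theorem~\ref{main}, which likewise reduces the backward case to the forward one via $h^{-1}$, approximates the acyclic continuum $X$ by a nested sequence of closed discs, and applies the contrapositive of Ostrovski's Theorem~\ref{thmD} (justifying the isotopy hypothesis exactly as you do, by the fact that orientation preserving planar homeomorphisms are isotopic to the identity) to get fixed points in the nested components containing $C$, whose intersection is $C$ by maximality (the paper packages this last step as Lemma~\ref{keylemma}). The only cosmetic difference is that you conclude by the finite intersection property on the nested compact sets $\Fix(h)\cap C_{n}$, while the paper extracts an accumulation point of the fixed points $w_{n}$.
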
 We are also going to exhibit an alternative proof of the above result, by showing that Theorem \ref{thmD} implies Theorem \ref{main}. Interestingly, we can also obtain a version of Theorem \ref{main} with stronger assumption and claim using similar arguments as in the proof of \cite[Thm. 1.2]{boronski_2017} and in the paper \cite{boronski-fixedpointsandperiodicpoints}. Boroński showed in \cite[Thm. 1.2]{boronski_2017} that if a continuum $C$ contains a $k$-periodic orbit $\Oee$ of an orientation reversing planar homeomorphism $g$, for $k>1$, then orbit of every $2-$periodic point linked to $\Oee$ must be contained in $C\cup g^{-1}(C)$, assuming $C\cup g^{-1}(C)$ is acyclic. On the other hand Bonino's results \cite{bonino_2006} implies that always such a $2-$periodic point exists. There is a similar concept of linked periodic orbits for orientation preserving planar or in general surface homomorphisms. Moreover, there exist an orientation preserving counterpart of Bonino's theorem due to Kolev \cite{kolev}. We are going to exploit these facts to construct a proof of Theorem \ref{main}, with the additional assumption that the bounded orbit is periodic. The proof is similar to to the proof of \cite[Thm. 1.2]{boronski_2017}, and highlights a connection between, on one hand side, linking of orbits in terms of appropriate Jordan curves and, on the other hand, co-existance of orbits in continua. 
\begin{thm}\label{main2}
	Suppose $ h\colon~\real^{2}\to\real^{2} $ is an orientation preserving planar
	homeomorphism and $ X $ is an acyclic continuum. Let $ C $ be a component of $ X ∩ h(X) $. If there is a periodic point $ c ∈ C $ such that $ \orbp c ⊆ C $ then $ C $ contains a fixed point of $ h $ linked to $\Oee$.
\end{thm}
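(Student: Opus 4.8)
The plan is to combine an existence statement with a localization argument, mirroring the way Boroński derives \cite[Thm. 1.2]{boronski_2017} from Bonino's theorem, but replacing Bonino's theorem by its orientation-preserving counterpart due to Kolev \cite{kolev}. Throughout, ``linked'' is to be read in the sense of linked periodic orbits for orientation-preserving planar homeomorphisms, a fixed point being regarded as a $1$-periodic orbit. There are two things to establish: first, that a fixed point linked to $\Oee$ exists at all; and second, that at least one such fixed point can be located inside $C$. For the first point I would appeal directly to Kolev's theorem: since $h$ is an orientation-preserving homeomorphism of $\plane$ carrying the periodic orbit $\Oee=\orbp c\subseteq C$, Kolev's result yields a fixed point $p$ of $h$ linked to $\Oee$. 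A priori $p$ is merely a point of the plane, so the substantive work is the localization.

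For the localization I would first record the reduction made available by fixedness: it suffices to prove $p\in C\cup h^{-1}(C)$. Indeed, if $p\in h^{-1}(C)$ then $h(p)\in C$, and since $p=h(p)$ this already forces $p\in C$, while $p\in C$ is the desired conclusion outright. To prove $p\in C\cup h^{-1}(C)$ I would follow the scheme of \cite[Thm. 1.2]{boronski_2017}. Note first that $C\cup h^{-1}(C)$ is a continuum: it contains $c$, since $h(c)\in\orbp c\subseteq C$ gives $c\in h^{-1}(C)$, so $c\in C\cap h^{-1}(C)$. Note also that $C\subseteq X\cap h(X)$ forces $h^{-1}(C)\subseteq X$, whence $C\cup h^{-1}(C)\subseteq X$. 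The argument then requires this subcontinuum to be acyclic; this is precisely the structural point underlying Theorem \ref{main}, and I would import it from there rather than deduce it from acyclicity of $X$ alone (a subcontinuum of an acyclic planar continuum may separate the plane, so acyclicity of $C\cup h^{-1}(C)$ is not automatic).

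Granting acyclicity of $K:=C\cup h^{-1}(C)$, I would argue by contradiction. Suppose $p\notin K$. Since $K$ is a non-separating continuum, its complement in the sphere $\sphere{2}=\plane\cup\{\infty\}$ is an open disk containing $p$, while $\Oee\subseteq C\subseteq K$. Adapting Boroński's construction, one produces from this separation a Jordan curve separating $p$ from $\Oee$ that is moreover compatible with the dynamics of $h$ (a free curve, obtained via the free-disc and Brouwer translation-arc machinery); the existence of such a dynamically compatible separating curve when $p$ lies outside $K$ is exactly what contradicts the hypothesis that $p$ is linked to $\Oee$. Hence $p\in K$, and by the reduction above $p\in C$. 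This exhibits $p\in C$ as a fixed point of $h$ linked to $\Oee$, as required.

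The main obstacle is this last step: upgrading the purely topological separation of $p$ from the orbit to an unlinking compatible with $h$, since a naive small curve around $p$ is of course not free and would prove too much. It is here that the linking machinery of Kolev and Boroński, together with the acyclicity (hence simple connectivity of the spherical complement) of $C\cup h^{-1}(C)$, must be deployed carefully, and it is here that the hypothesis feeds in through the inclusion $C\cup h^{-1}(C)\subseteq X$ and the structural analysis of the components of $X\cap h(X)$ borrowed from the proof of Theorem \ref{main}. A secondary point to verify is that Kolev's theorem applies verbatim to $h$ on $\plane$ and that the notion of linking it produces coincides with the one in which the conclusion of Theorem \ref{main2} is phrased.
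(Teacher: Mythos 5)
There is a genuine gap, and it sits exactly where you place your ``main obstacle.'' Your localization rests on the acyclicity of $K=C\cup h^{-1}(C)$, which you propose to ``import'' from Theorem \ref{main}. But neither the statement nor either proof of Theorem \ref{main} establishes this: both proofs deliberately work with the disc neighbourhoods $R_{n}$ supplied by Lemma \ref{keylemma} precisely in order to \emph{avoid} any claim about $C\cup h^{-1}(C)$. Acyclicity of $h^{-1}(C)\cup C$ is the extra \emph{hypothesis} in Boro\'nski's Theorem \ref{boronski}, and dropping it is the whole point of the paper (it is the content of Boro\'nski's Question); via Mayer--Vietoris it amounts to connectedness of $C\cap h^{-1}(C)$, which is not known here. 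So the premise of your contradiction argument has no support. Moreover, even granting acyclicity, the decisive step --- producing, from $p\notin K$, the two discs and free isotopies of Definition \ref{unlinked-orbits} --- is only gestured at, and it is not routine: $K$ is not $h$-invariant ($h(K)=C\cup h(C)$), so a curve around $K$ has no controlled image, and Boro\'nski's free-disc/translation-arc construction does not transfer. Note also that your scheme would prove that \emph{every} fixed point linked to $\Oee$ lies in $C$, which is stronger than the theorem and not something the paper claims.

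The paper localizes quite differently, and the difference is instructive. It never applies Kolev's theorem to $h$ itself in the generic case (the fixed point so obtained could be the ``wrong'' one, far from $C$). Instead, Lemma \ref{lemma-main} modifies $h$ outside $R_{n}\cap h^{-1}(R_{n})$ to a planar homeomorphism $H_{n}$ agreeing with $h$ there and having \emph{no} fixed points elsewhere; Theorem \ref{kolev} applied to $H_{n}$ then traps an $H_{n}$-linked fixed point $w_{n}$ inside $R_{n}\cap h^{-1}(R_{n})$, and a limit $w=\lim w_{n}$ lies in $C=\bigcap_{n}R_{n}$ and is $h$-fixed. The remaining delicate step --- for which your proposal has no counterpart --- is transferring linkedness from $H_{n_{0}}$ back to $h$: assuming $w$ unlinked, one shows $h(\fr D_{1})$ and $H_{n_{0}}(\fr D_{1})$ are homotopic in $\plane\setminus(\Oee\cup\{w_{n_{0}}\})$ (comparing the arc families off $h(P)$, where $P=\inter R_{n_{0}}\cap h^{-1}(\inter R_{n_{0}})$, and checking $\Oee$ and $w_{n_{0}}$ miss the relevant filled regions), then upgrades homotopy to isotopy by Theorem 2.1 of \cite{Curves}, contradicting $H_{n_{0}}$-linkedness of $w_{n_{0}}$. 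In short: the paper pushes the linking through a sequence of surgically modified maps, whereas you try to fence a single Kolev fixed point with a separation argument whose key hypothesis (acyclicity of $C\cup h^{-1}(C)$) is unavailable and whose key construction is left open.
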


\section{Preliminaries}
We follow notation from \cite{boronski_2017}. For a subset $U$ of a topological space we define its closure $\cl U=\clu U$, boundary $\fr U$ and interior $\inter U$ in the usual way. A maximal connected subset of a topological space will be called a connected component, or just a component. We will say that $U$ is forward invariant if it contains its image and strictly invariant if it is equal to its image. Similarly, we define backward invariance. Any sequence $\{q_{n}\}_{n\in\nat}$, where $q_{n}$ are either sets or points, will be abbreviated to $\{q_{n}\}$ if no confusion about the index set is to be made. Respectively, forward and backward orbit of a point $z$ under mapping $h$ will be denoted $\orbpa{z}{h}$ and $\orbma{z}{h}$, and we will sometimes omit superscript and write $\orbp{z}$ and $\orbm{z}$. A subset $X\subset \plane$ will be called plane continuum, if it is compact and connected. Moreover, any continuum that does not separates the plane, that is whose complement is connected, will be called acyclic. It is well known, that plane continuum $X$ is acyclic if and only if we can find a decreasing sequence of closed discs $\{K_{n}\}_{n\in\nat}$ such that $K_{n}\subset \inter K_{n-1}$ and $X=\bigcap_{n\in\nat}K_{n}$. A Jordan curve is a simple, closed curve. We will sometimes identify Jordan curve with its image. A simply connected compact subset of the plane with boundary being a Jordan curve will be called Jordan domain. Any Jordan curve $\gamma$ separates plane into two domains, one of them is bounded and will be called the inner domain of $\gamma$. Any subset of the plane homeomorphic with a closed unit disc will be called a closed disc. We recall a result we will need later on.
\begin{lem}\label{intersection-of-inner-domains}
	Nonempty intersection of finitely many Jordan domains is a disjoint union of Jordan domains.
\end{lem}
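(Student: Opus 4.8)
The plan is to reduce to two domains by induction and then to analyse the components of a single intersection $A\cap B$ of two Jordan domains. For the induction, suppose the claim holds for $n-1$ domains, so that $D_1\cap\dots\cap D_{n-1}=\bigsqcup_\alpha J_\alpha$ with the $J_\alpha$ pairwise disjoint Jordan domains; then $D_1\cap\dots\cap D_n=\bigsqcup_\alpha(J_\alpha\cap D_n)$, and it suffices to know that each $J_\alpha\cap D_n$ is itself a disjoint union of Jordan domains, the disjointness across different $\alpha$ being automatic. Hence everything comes down to showing that each connected component $C$ of $A\cap B$, for two Jordan domains $A,B$, is a Jordan domain. Note that distinct components of the closed set $A\cap B$ are disjoint closed sets, so the ``disjoint union'' part is free once each component is shown to be a closed disc.

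For the non-separation (acyclicity) of each component I would pass to the sphere $\sphere{2}=\plane\cup\{\infty\}$. By the Jordan--Schoenflies theorem the sets $U=\sphere{2}\setminus A$ and $V=\sphere{2}\setminus B$ are connected open topological discs, while $U\cup V=\sphere{2}\setminus(A\cap B)$ and $U\cap V=\sphere{2}\setminus(A\cup B)\ni\infty$ is nonempty. Feeding the connectivity of $U,V$ into the tail $\tilde H_0(U)\oplus\tilde H_0(V)\to\tilde H_0(U\cup V)\to 0$ of the reduced Mayer--Vietoris sequence forces $\tilde H_0(U\cup V)=0$, so $\sphere{2}\setminus(A\cap B)$ is connected, i.e. $A\cap B$ does not separate the sphere. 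Since $C$ is a full component of the compactum $A\cap B$, the standard fact that a component of a planar compactum inherits non-separation from the ambient set then yields that $\sphere{2}\setminus C$ is connected as well; thus every component $C$ is an acyclic continuum.

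Acyclicity alone does not make $C$ a disc (the continuum $X$ of the main theorem is acyclic but need not be one), so the substantive step is to identify the boundary. Here I would first show $C=\cl(\inter C)$, discarding the degenerate components --- isolated points or arcs produced by mere tangencies of $\partial A$ and $\partial B$ --- so that the statement is read for the components with nonempty interior. Then, using that the Jordan curves $\partial A,\partial B$ are locally connected and that $\partial C\subseteq\partial A\cup\partial B$, I would argue that $\inter C$ is a bounded simply connected domain with locally connected boundary; Carath\'eodory's theorem then makes the Riemann map of $\inter C$ extend to a homeomorphism of closed discs, so $\partial C$ is a Jordan curve, and the Schoenflies theorem identifies $C$ with a closed disc.

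The main obstacle is exactly this last boundary analysis, because $\partial A\cap\partial B$ need not be finite and the two curves may meet in a wild (e.g. Cantor) set: the homological argument disposes of ``holes'' cleanly, but one must still rule out that a component is pinched (a figure-eight boundary) or combed (a non-locally-connected boundary). The economical route is to normalise $A$ to the round unit disc by Schoenflies, reducing the problem to the interaction of the single Jordan curve $\partial B$ with the unit circle, and then to exploit the uniform continuity of a parametrisation of $\partial B$ --- forced by its being a Jordan curve --- to show that its oscillations near the circle must shrink, which prevents the formation of combs and pinches and secures local connectivity of each $\partial C$.
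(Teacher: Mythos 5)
The first thing to say is that the paper contains no proof of this lemma at all: it is explicitly ``recalled'' as a known result (it is the classical intersection theorem of Ker\'ekj\'art\'o), so your attempt has to be measured against what a complete proof requires rather than against an argument in the text. By that measure there is a genuine gap, and it sits exactly where you yourself locate ``the main obstacle.'' Your induction to two domains is routine and correct, and the Mayer--Vietoris step is a clean, correct way to see that $A\cap B$, and hence each of its components, fails to separate $\sphere{2}$. But that is the easy half. The decisive half --- that the boundary of each component is a simple closed curve --- is only a plan, and the plan as stated would fail. First, the Carath\'eodory step is misquoted: local connectivity of the boundary of a bounded simply connected domain yields a \emph{continuous} extension of the Riemann map to the closed disc, not a homeomorphism (the slit disc is simply connected with locally connected boundary, yet its boundary is not a Jordan curve). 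To get a homeomorphism you must rule out cut points of the boundary, i.e.\ precisely the pinched figure-eight configuration, and ``uniform continuity of a parametrisation of $\partial B$ forces oscillations to shrink'' is not an argument against pinching: a pinch is two lobes of the intersection meeting at a single point of $\partial A\cap\partial B$, which arbitrarily small oscillations do not preclude; one must show the two lobes lie in \emph{different} components, and that is the actual content of Ker\'ekj\'art\'o's theorem (classically handled by a careful Jordan-curve-theorem or prime-end analysis). Second, even your ``comb'' exclusion is unproved: $\partial C\subseteq\partial A\cup\partial B$ does not hand you local connectivity, since a connected closed subset of a Peano continuum need not be locally connected (a $\sin(1/x)$-continuum sits inside a square), so local connectivity of $\partial C$ must itself be established, not inherited.

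There is also a statement-level problem your patch does not fix. With the paper's definition of Jordan domain as a \emph{compact} set, the lemma is false verbatim: two tangent closed discs intersect in a point, and, worse, two closed domains whose boundary curves share an arc extending beyond their two-dimensional overlap produce a component that is a disc with a whisker --- a component with nonempty interior that is not a closed disc, refuting your asserted reduction $C=\cl(\inter C)$. Discarding ``degenerate components'' removes isolated points and free arcs but not whiskers attached to nondegenerate components. The version the paper actually uses (in Lemma \ref{keylemma} and Lemma \ref{lemma-main}) concerns \emph{interiors}, i.e.\ components of intersections of open Jordan domains, where these pathologies cannot occur because components of an open set are open; a correct write-up should be formulated there from the start, and then either carry out the full pinch/comb analysis or cite Ker\'ekj\'art\'o's theorem, which is what the paper implicitly does.
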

Moreover, two continuous mappings $h_{1},h_{2}\colon X\to Y$ of topological spaces $X,Y$ are said to be homotopic in $Y$, if we can find a continuous function $J\colon X\times [0,1]\to Y$, in which case $J$ is called a homotopy. If additionally $x\mapsto J(x,t)$ is a homeomorphism for every $t\in[0,1]$, it is called an isotopy.
In order to give a proof based on the result of B. Kolev, we need to introduce the following
\begin{dfn}\label{unlinked-orbits}
	Let $ \orbp{1}$ and $\orbp{2}$ be two periodic orbits of a homeomorphism $h$ of a surface $S$. We say that $\orbp{1}$ and $\orbp{2}$ are unlinked if there exist two discs $D_{i}\subset S$, $i=1,2$, with the following properties:
	\begin{enumerate}[label=(\arabic*)]
		\item for every $i=1,2$ we have $\Oee_{i}\subset \inter(D_{i})$,
		\item $D_{1}\cap D_{2}=\emptyset$,
		\item for every $i=1,2$ the image $h(\fr D_{i})$ is freely isotopic to $\fr D_{i}$ in $S\setminus(\Oee_{1}\cup\Oee_{2})$.
	\end{enumerate}
\end{dfn}Otherwise the orbits $\Oee_{1}$ and $\Oee_{2}$ are said to be linked. Sometimes we will signify the mapping with respect to which orbits are linked or unlinked, writing $h$-linked or $h$-unlinked.
Two sets that are images of Jordan curves are said to be freely isotopic, if their respective curves are.

B. Kolev formulated his result for $C^{1}$-diffeomorphisms, but, as Bonino pointed out in his introduction \cite{bonino_2006}, it holds true also for homeomorphisms.
\begin{thm}[{\cite{kolev}}]\label{kolev}
	Let $h\colon \plane\to\plane$ be an orientation preserving homeomorphism. Then for every $k$-periodic orbit $\Oee$, for $k>1$, there exists a fixed point linked to $\Oee$.
\end{thm}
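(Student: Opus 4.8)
The plan is to derive Theorem \ref{main2} from Kolev's Theorem \ref{kolev}, whose only role is to furnish a fixed point linked to $\Oee$; all the work goes into placing such a point inside $C$. Write $\Oee=\orbp c=\{c,h(c),\dots,h^{k-1}(c)\}$ for the finite, $h$-invariant orbit of the periodic point $c$, of period $k$. If $k=1$ then $c$ itself is a fixed point of $h$ in $C$ and there is nothing to prove, so assume $k>1$. By Theorem \ref{kolev} there is at least one fixed point of $h$ linked to $\Oee$, so it suffices to prove that any fixed point $p$ linked to $\Oee$ must belong to $C$: the point produced by Kolev is then a fixed point in $C$ linked to $\Oee$, which is exactly the assertion. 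I argue this by contraposition, assuming $p\notin C$ and exhibiting the two discs of Definition \ref{unlinked-orbits} that witness $\Oee$ and $\{p\}$ being $h$-unlinked, contradicting Theorem \ref{kolev}.

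The disc $D_2$ about $\{p\}$ is easy. Since $p$ is fixed and $p\notin C\supseteq\Oee$, any sufficiently small closed disc $D_2$ with $p\in\inter D_2$ has $\Oee\cap(D_2\cup h(D_2))=\emptyset$, and both $\fr D_2$ and $h(\fr D_2)$ are simple closed curves enclosing the single puncture $p$ and no point of $\Oee$; such curves are freely isotopic in $\plane\setminus(\Oee\cup\{p\})$, so condition (3) holds for $D_2$. The heart of the matter is the disc $D_1$ about $\Oee$.

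For $D_1$ I would use the acyclicity of $X$: fix closed discs $K_n$ with $K_n\subset\inter K_{n-1}$ and $\bigcap_n K_n=X$, so that also $\bigcap_n h(K_n)=h(X)$. The key structural observation is that $C\subseteq X\cap h(X)$ (as $C$ is a component of $X\cap h(X)$), hence $C\subseteq K_n\cap h(K_n)$ for every $n$; since $C$ is connected and contains all of $\Oee$, the whole orbit lies in the single component $U$ of the Jordan domain decomposition of $K_n\cap h(K_n)$ (Lemma \ref{intersection-of-inner-domains}) that contains $C$, and in fact $\Oee\subset\inter U$. This is precisely what defeats the Dehn-twist ambiguity that otherwise obstructs condition (3): choose a finite tree $T\subset\inter U$ spanning $\Oee$ and a thin closed disc neighbourhood $W$ of $T$ with $\Oee\subset\inter W$ and $W\subset\inter K_n\cap\inter h(K_n)$. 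Taking $D_1=K_n$ and assuming for the moment that $p\notin X$ (so that $p\notin K_n\cup h(K_n)$ for large $n$), the sets $K_n\setminus\inter W$ and $h(K_n)\setminus\inter W$ are annuli containing no point of $\Oee\cup\{p\}$; consequently $\fr K_n\simeq\fr W\simeq\fr h(K_n)=h(\fr K_n)$ in $\plane\setminus(\Oee\cup\{p\})$, which is condition (3) for $D_1$. Conditions (1) and (2) are immediate ($\Oee\subset\inter K_n$, and we shrink $D_2$ so that $D_1\cap D_2=\emptyset$), so $\Oee$ and $\{p\}$ are $h$-unlinked, contradicting Theorem \ref{kolev}.

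The remaining case, $p\in X$, is where I expect the main difficulty. Then $p=h(p)\in X\cap h(X)$, so $p$ lies in a component of $X\cap h(X)$ different from $C$, and the disc $K_n$ now encloses $p$, ruling out the clean choice $D_1=K_n$. The natural fix is to take $D_1$ to be the Jordan-domain component of $K_n\cap h(K_n)$ containing $C$, which separates $C$ from $p$; but for this smaller disc the convenient inclusion $C\subseteq h(D_1)$ may fail, so it is no longer clear that $\Oee$ sits in a single component of $D_1\cap h(D_1)$, and the spanning-tree argument for condition (3) does not go through verbatim. Resolving this is the crux: I would use the acyclicity of $X$ to control the global complementary structure and to force $p$ into the unbounded complementary region of an appropriate disc about $C$ (equivalently, to trap $\Oee$ in one component of $D_1\cap h(D_1)$), after which the same common-annulus argument applies and yields the unlinking, completing the proof.
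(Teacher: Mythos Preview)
Your write-up is not a proof of Theorem~\ref{kolev}. That theorem is Kolev's result, which the paper merely quotes (with the remark that Bonino observed it extends from $C^1$-diffeomorphisms to homeomorphisms); no proof is given or expected. What you have actually written is an attempt at Theorem~\ref{main2}, using Theorem~\ref{kolev} as a black box. I will compare that attempt to the paper's proof of Theorem~\ref{main2}.

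Your strategy is genuinely different from the paper's. You apply Kolev once to $h$ and then try to prove the strong auxiliary statement that \emph{every} fixed point linked to $\Oee$ already lies in $C$. The paper never asserts this. Instead it invokes Lemma~\ref{lemma-main} to build, for each $n$, an orientation-preserving extension $H_n$ of $h|_{R_n\cap h^{-1}(R_n)}$ with no fixed points outside $R_n\cap h^{-1}(R_n)$, applies Kolev to $H_n$ to obtain an $H_n$-linked fixed point $w_n$ (which is then forced into $R_n$), passes to a limit $w\in C$, and finally checks that $w$ is $h$-linked to $\Oee$ by comparing $h(\fr D_1)$ and $H_{n_0}(\fr D_1)$ via a homotopy argument on the arcs outside $h(P)$. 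The extension lemma is what confines the fixed points; your approach tries to do without it.

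The gap in your argument is exactly where you flag it: the case $p\in X\setminus C$, i.e.\ $p$ lies in a component of $X\cap h(X)$ other than $C$. You suggest replacing $K_n$ by the component $R_n$ of $K_n\cap h(K_n)$ containing $C$, but then the annulus argument needs $\Oee$ (or a spanning tree through it) to lie in a single component of $R_n\cap h(R_n)$, and there is no control over $h(R_n)\subset h(K_n)\cap h^2(K_n)$ relative to $C$. Your closing sentence (``use the acyclicity of $X$ to control the global complementary structure'') is a hope, not an argument, and I see no way to complete it short of reintroducing something like the paper's extension construction. So as a proof of Theorem~\ref{main2} this is incomplete, and as a proof of the stated Theorem~\ref{kolev} it is simply addressing the wrong statement.
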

The key lemma in proving our results is
\begin{lem}\label{keylemma}
	Let $h\colon \plane\to\plane$ be a homeomorphism and $X$ an acyclic continuum. Let also $\{K_{n}\}_{n\in\nat}$ be a descending sequence of closed discs such that $K_{n}\subset \inter K_{n-1}$ and $X=\bigcap_{n\in\nat}K_{n}$. Then for every connected component $C$ of $X\cap h(X)$, closures $R_{n}$, $n=1,2,...$, of the component of $\inter K_{n}\cap \inter h(K_{n})$ containing $C$ constitute a descending sequence of closed discs with $\bigcap_{n\in\nat}R_{n}=C$.
\end{lem}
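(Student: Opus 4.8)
The plan is to realize each $R_n$ as the closure of a single Jordan domain, to control the nesting through the strict nesting of the $K_n$, and to pin down the intersection using maximality of the component $C$.

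First I would observe that $C\subseteq \inter K_n\cap\inter h(K_n)$ for every $n$. Indeed, since $X=\bigcap_m K_m\subseteq K_{n+1}\subset \inter K_n$ we get $X\subseteq\inter K_n$, and applying the homeomorphism $h$ (which is open, so $h(\inter K_n)=\inter h(K_n)$) gives $h(X)\subseteq\inter h(K_n)$; as $C\subseteq X\cap h(X)$ the claim follows. Each $K_n$, and hence each $h(K_n)$, is a closed disc, so its topological interior is the inner (Jordan) domain of a Jordan curve. Thus $\inter K_n\cap\inter h(K_n)$ is a nonempty intersection of two Jordan domains, and by Lemma~\ref{intersection-of-inner-domains} it is a disjoint union of Jordan domains. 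Since $C$ is connected it lies in exactly one of them; call it $G_n$ and set $R_n=\cl G_n$. By the Schoenflies theorem the closure of a Jordan domain is a closed disc with $\inter R_n=G_n$, so every $R_n$ is a closed disc.

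Next I would prove the strict nesting $R_{n+1}\subseteq\inter R_n$. From $K_{n+1}\subset\inter K_n$ and $h(K_{n+1})\subset\inter h(K_n)$ I note $\cl G_{n+1}\subseteq\cl(\inter K_{n+1})\cap\cl(\inter h(K_{n+1}))=K_{n+1}\cap h(K_{n+1})\subseteq\inter K_n\cap\inter h(K_n)$. Hence $\cl G_{n+1}$ is a connected subset of $\inter K_n\cap\inter h(K_n)$ containing $C$, so it must be contained in the component $G_n$ of that set which contains $C$. Therefore $R_{n+1}=\cl G_{n+1}\subseteq G_n=\inter R_n$, yielding a descending sequence of closed discs. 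Finally, to identify the intersection, the inclusion $C\subseteq\bigcap_n R_n$ is immediate from $C\subseteq G_n\subseteq R_n$; for the reverse inclusion $R_n=\cl G_n\subseteq K_n\cap h(K_n)$ gives $\bigcap_n R_n\subseteq(\bigcap_n K_n)\cap(\bigcap_n h(K_n))=X\cap h(X)$, the last equality using that $h$ is injective. Being a descending intersection of closed discs, $\bigcap_n R_n$ is a continuum containing $C$, so by maximality of the component $C$ of $X\cap h(X)$ we get $\bigcap_n R_n\subseteq C$, hence equality.

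The main obstacle is the nesting step: one must ensure that $G_{n+1}$ lands inside the \emph{interior} $G_n=\inter R_n$ rather than merely inside $R_n$, and this is precisely where the strict nesting $K_{n+1}\subset\inter K_n$ enters, together with the elementary identities $\cl(\inter K)=K$ and $\inter(\cl G)=G$ for a closed disc $K$ and a Jordan domain $G$ (both instances of Schoenflies). The remaining care is routine point-set topology: that the interior of a closed disc is a Jordan domain so that Lemma~\ref{intersection-of-inner-domains} applies, and that a descending intersection of continua is again a continuum.
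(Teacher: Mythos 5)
Your proof is correct and takes essentially the same route as the paper's: Lemma \ref{intersection-of-inner-domains} together with Schoenflies to see each $R_{n}$ is a closed disc, connectedness of a set containing $C$ to force it into the component containing $C$ (giving the nesting), and maximality of the component $C$ of $X\cap h(X)$ to identify $\bigcap_{n}R_{n}$ with $C$. The only difference is that you additionally establish the strict nesting $R_{n+1}\subseteq\inter R_{n}$, which the paper defers to the proof of Theorem \ref{main2}.
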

\begin{proof}
	Note that $R_{n}$ is a closed disc, since by Lemma \ref{intersection-of-inner-domains} $\inter K_{n}\cap h(\inter K_{n})$ is a disjoint union of inner domains. Moreover, $\{R_{n}\}$ is descending. Indeed, since $ R_{n}$ is connected, it must be contained in some connected component of $\inter K_{n-1}\cap h(\inter K_{n-1})$ and if it was not contained in the component containing $C$, then $C$ would connect two different components in $\inter K_{n-1}\cap h(\inter K_{n-1})$, which is not possible. To see that $C= \bigcap_{n\in\nat}R_{n}$, notice that $\tilde{R}=\bigcap_{n\in\nat}R_{n}$ is connected and $C\subset \tilde{R}\subset X\cap h(X)$. Moreover, $C$ by definition is a maximal connected subset of $X\cap h(X)$, hence $C= \tilde{R}$.
\end{proof}
\begin{lem}\label{lemma-main}
	Let $h\colon\plane\to\plane$ be an orientation preserving homeomorphism, $D\subset\plane$ a closed disc that is neither backward nor forward invariant and $R$ the closure of a component of $h(\inter D)\cap\inter  D\neq\emptyset$. Then there exists an extension $H$ of $h|_{R\cap h^{-1}(R)}$ with no fixed points in $\plane\setminus (R\cap h^{-1}(R))$.
\end{lem}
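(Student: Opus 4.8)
The plan is to leave $H$ equal to $h$ on $R$ and to extend it over the exterior of the disc $R$ in a fixed-point-free way. The starting observation is a reduction: every fixed point of $h$ that lies in $R$ already lies in $S:=R\cap h^{-1}(R)$. Indeed, if $h(x)=x$ and $x\in R$, then $h(x)=x\in R$, so $x\in h^{-1}(R)$ and hence $x\in S$. Consequently $h$ has no fixed point in $R\setminus S$, so setting $H=h$ on the closed disc $R$ (which is a disc by Lemma~\ref{intersection-of-inner-domains} and the Schoenflies theorem) contributes no fixed point outside $S$. It then remains to extend $H$ continuously over the closed exterior $E:=\cl(\plane\setminus R)$ so that $H|_{\fr R}=h|_{\fr R}$ and $H$ has no fixed point in the open exterior $E\setminus\fr R$.

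Next I would analyse $\fr R$, and this is where the hypothesis that $D$ is neither forward nor backward invariant enters. Since $R$ is a component of $\inter D\cap h(\inter D)$, we have $\fr R\subseteq\fr D\cup h(\fr D)$, and non-invariance guarantees that arcs of both $\fr D$ and $h(\fr D)$ genuinely appear. A short case analysis shows that $h$ fixes no point in the relative interior of either kind of arc: a fixed point $p$ on a $\fr D$-arc lies in $\fr D\cap h(\inter D)$, whence $p=h^{-1}(p)\in\inter D$, a contradiction; symmetrically a fixed point on an $h(\fr D)$-arc lies in $h(\fr D)\cap\inter D$, forcing $p=h^{-1}(p)\in\fr D$, again impossible. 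Thus $\Fix(h)\cap\fr R$ is contained in the ``corner set'' $\fr R\cap\fr D\cap h(\fr D)$, and in particular all such boundary fixed points already lie in $S$.

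For the extension itself I would use that $E$ is homeomorphic to $\fr R\times[0,\infty)$, with $\fr R=\fr R\times\{0\}$, and let $\rho\colon E\to\fr R$ be the radial retraction. Writing $g:=(h-\id)|_{\fr R}$, the map $g$ is nonvanishing off the corner set, so $G:=g\circ\rho$ is a continuous extension of $g$ that is nonvanishing on all of $E$ except along the rays $\rho^{-1}(\Fix(h)\cap\fr R)$. Setting $H:=\id+G$ on $E$ then matches $h$ on $\fr R$, yields a globally continuous $H$, and has fixed points on $E$ only along those exceptional rays. To remove them I would modify $G$ in a neighbourhood of the corner set by a small perturbation that keeps the component of $H(x)-x$ in the outward normal direction strictly nonzero for $x$ in the open exterior (equivalently, pushing slightly toward $\inter R$); this leaves $H(p)=h(p)=p$ at the corner points themselves (which lie in $S$, hence are permitted) while destroying all spurious fixed points just outside $\fr R$. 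The upshot is $\Fix(H)=\Fix(h)\cap R\subseteq S$, as required.

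The main obstacle is this last step: the retraction handles the generic part of the exterior with no winding-number obstruction, because $E$ deformation retracts onto $\fr R$, but one must control the behaviour near the boundary fixed points of $h$, where $H$ is pinned to agree with $h$ yet must avoid creating nearby fixed points. The decisive input there is the boundary analysis of the second paragraph, which confines such fixed points to the corner set and makes the outward-normal perturbation succeed.
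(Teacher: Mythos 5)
Your reduction (all fixed points of $h$ in $R$ automatically lie in $S=R\cap h^{-1}(R)$) and your boundary analysis (fixed points on $\fr R$ are confined to $\fr R\cap\fr D\cap h(\fr D)$) are both correct, but the proposal misses what the lemma actually has to deliver. The extension $H$ is used downstream via the Brouwer Plane Translation Theorem (first proof of Theorem \ref{main}) and via Kolev's theorem (Theorem \ref{main2}); both require $H$ to be an \emph{orientation preserving homeomorphism of the plane}, and the paper's proof is careful to produce exactly that (``bijective orientation preserving extension''). Your $H$ is only a continuous map: on the exterior $E$ you set $H=\id+g\circ\rho$ with $g=(h-\id)|_{\fr R}$, and nothing controls injectivity or surjectivity. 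Indeed injectivity fails generically: distinct points on distinct rays can collide under $\id+g\circ\rho$, and, worse, since you insist $H=h$ on all of $R$, the image $H(R)=h(R)$ protrudes outside $R$ (as $D$ is not backward invariant) into the very region where your exterior formula also lands, so $H(R)\cap H(E\setminus\fr R)\neq\emptyset$ in general. With a merely continuous $H$, a bounded orbit does not yield a fixed point, so the lemma becomes useless for the paper's argument. Note also that your normalization $H=h$ on all of $R$ is strictly stronger than what the lemma asks ($H$ need only extend $h|_{S}$), and it is precisely this extra constraint that creates both the injectivity clash and your corner difficulties; the paper deliberately modifies $h$ on $R\setminus h^{-1}(R)$.

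There is a secondary, repairable-but-unproven step: the perturbation near the corner rays. As written, away from the corners $H=\id+g\circ\rho$ has a spurious fixed point at any exterior point $x$ where $h(\rho(x))-\rho(x)$ points radially inward with magnitude equal to the radial distance of $x$ (nothing prevents $h$ from moving boundary points inward), so the fixed set is not confined to the rays over $\Fix(h)\cap\fr R$ as you claim; and near a corner fixed point $p$ your perturbation must vanish at $p$ while $g\circ\rho$ takes small values in all directions, so ``keep the outward normal component strictly nonzero'' (which moreover contradicts your phrase ``pushing toward $\inter R$'') needs an actual estimate, not an assertion. For contrast, the paper's route avoids all of this: after conjugating $\fr R$ to the unit circle, it squashes each lune of $D_r\setminus\clu G$ onto a thin convex lens by maps $\phi_r$ fixing the circle arcs, so that $T'=\phi_2\circ T\circ\phi_1^{-1}$ agrees with $T$ exactly on $\clu G\cap T^{-1}(\clu G)$ and is fixed-point free elsewhere on its domain, and then extends radially with an outward $\tfrac{3}{2}$-stretch, $T'(z)=T'(\pi(z))+\tfrac{3\rho(z)}{2}\mu_{T'(z)}$ --- every step manifestly preserving bijectivity and orientation, with fixed-point freeness outside $\clu G\cap T^{-1}(\clu G)$ checked by the elementary subtraction argument. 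To salvage your approach you would have to abandon $H=h$ on $R\setminus h^{-1}(R)$ and rebuild the exterior extension as a homeomorphism, at which point you are essentially reconstructing the paper's proof.
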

\begin{proof}
	Put $D'_{1}=\inter D$, $h(D'_{1})=D'_{2}$. By assumption $\fr D'_{1}\cap \fr D'_{2}$ contains at least two points and, by Lemma \ref{intersection-of-inner-domains} the component $G'=\inter R$ of $D'_{1}\cap D'_{2}$ has for its boundary a simple closed curve $J'$. We now transform entire plane, so that the image of $J'$ is a unit circle, more suitable for our argument. Namely, let $f\colon \real ^{2}\to\real ^{2}$ be a homeomorphism, such that $J=f(J')$ is a circle. Put $T=f\circ h \circ f^{-1}$. Then $T$ is orientation preserving. Let also $D_{1}=f(D'_{1})$, $T(D_{1})=(f\circ h\circ f^{-1})(f(D'_{1}))=D_{2}$ and put $G=f(G')$.
	\begin{figure}[h]
		\caption{The set $D_{1}$ is marked by dashed line, whereas $D_{2}$ is marked by continuous line. The unit disc $G$ is marked by alternating dashed and continuous lines, corresponding respectively to arcs $L_{2i}$ and $L_{1i}$.}
		\centering
		\includegraphics[width=17cm]{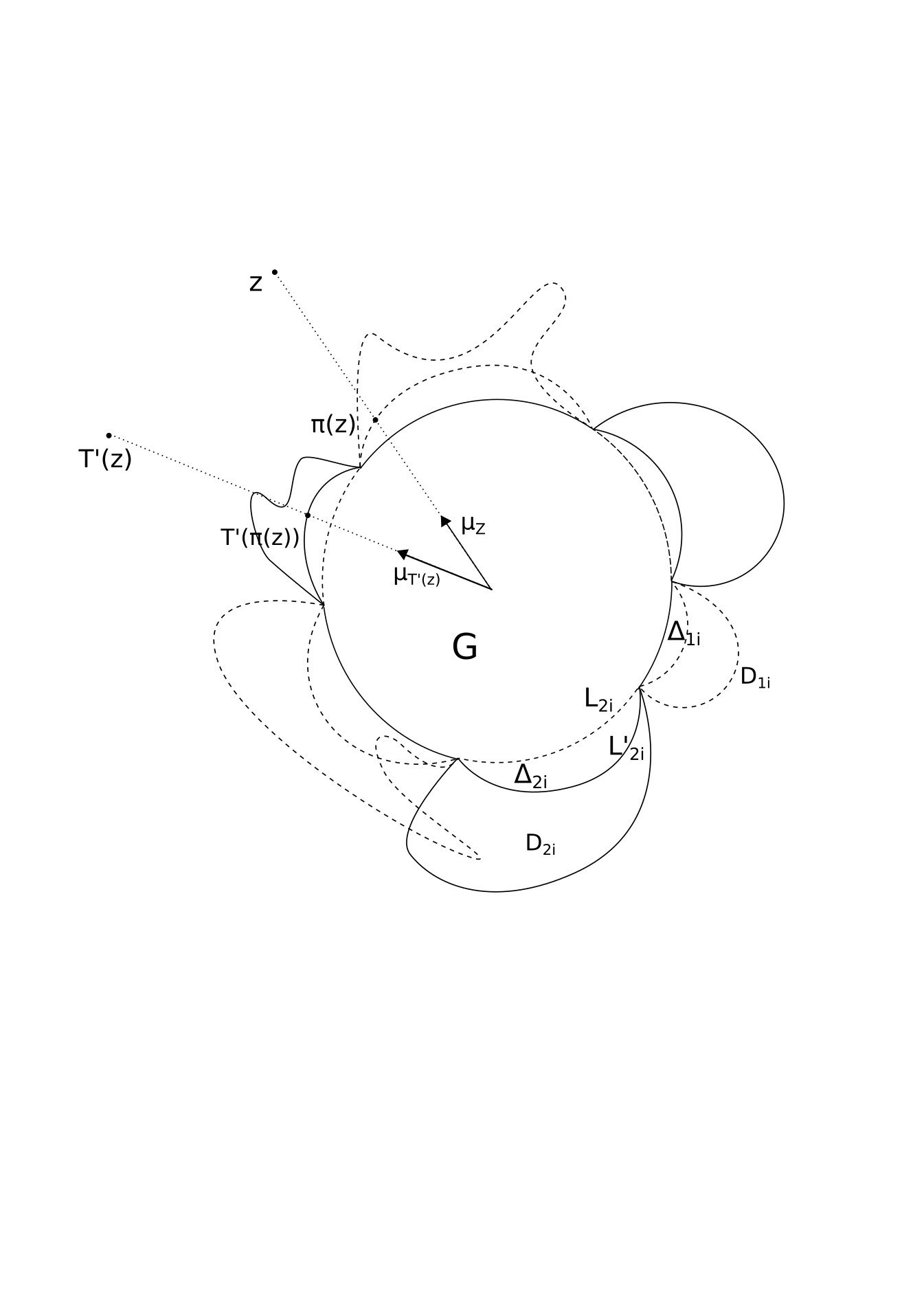}
		\label{fig:drawing}
	\end{figure}
	
	For $r=1,2$ the components $D_{ri}$ of $D_{r}\setminus \clu G$ have each as a boundary a simple closed curve composed of an arc $L_{ri}$ of $J$ and an arc $A_{ri}$ of $\fr D_{r}$ with common endpoints, see Figure \ref{fig:drawing}. For each pair of subscripts $r$ and $i$, let $L'_{ri}$ be a circular arc of radius sufficiently close to $1$ with the same endpoints as $L_{ri}$, such that no two arcs $L'_{ri}$ meet except in endpoints. This is possible since the arcs $L_{ri}$ of $J$ are disjoint except for endpoints. Let $\Delta_{ri}$ be the inner domain of $L_{ri}\cup L'_{ri}$.
	
	Note that we can find isotopies $\kappa_{ri}\colon [0,1]^{2}\to \clu D_{ri}$ and $\iota_{ri}\colon [0,1]^{2}\to \clu\Delta_{ri}$ between respectively $L_{ri}$ and $A_{ri}$, and $L_{ri}$ and $L'_{ri}$ that fixes endpoints. We can also demand that $\clu D_{ri}$ decomposes into disjoint family of curves $\{\kappa_{ri}(s,\cdot)\}_{s
	\in [0,1]}$. Similarly, $\clu \Delta_{ri}$ decomposes into $\{\iota_{ri}(s,\cdot)\}_{s
	\in [0,1]}$. Let us put $\iota_{ri}'(s,t)=\iota_{ri}(\iota_{ri}^{-1}(\kappa_{ri}(s,0))_{1},t)$, where $(\cdot)_{1}$ is a projection on the first coordinate. Then $\iota_{ri}'|_{[0,1]\times\{0\}}=\kappa_{ri}|_{[0,1]\times\{0\}}$. Put $\phi_{ri}(x)=\iota_{ri}'(\kappa_{ri}^{-1}(x))$. It maps $\clu D_{ri}$ onto $ \clu \Delta _{ri} $ and leaves fixed each point of $L_{ri}$. Hence if
	\[\clu\Delta_{r}=\clu G\cup \bigcup_{i}\clu\Delta_{ri}\text{ for } r=1,2 
	\]the functions $\phi_{r}$ defined by
	\begin{align*}
	\phi_{r}|_{\clu G} & =\id\\
	\phi_{r}|_{\clu D_{ri}} & =\phi_{ri}
	\end{align*}are topological maps of $\clu D_{r}$ onto $ \clu\Delta_{r} $ for $r=1,2$.
	
	Let $T'\colon \clu\Delta_{1}\to\clu\Delta_{2}$ be defined as $T'=\phi_{2}\circ T\circ\phi_{1}^{-1}$. Then $T|_{\clu G\cap T^{-1}(\clu G)}=T'|_{\clu G\cap T^{-1}(\clu G)}$. Moreover, if $x\in \clu\Delta_{1}\setminus \clu G$, $x\notin T'(\clu\Delta_{1})$. Similarly, if $x\in \clu G\setminus (\clu G\cap T^{-1}(\clu G))=\clu G\setminus T^{-1}(\clu G)$, then  $T\phi_{1}^{-1}(x)=T(x)\notin \clu G$. As $\phi_{2}$ is a homeomorphism, $\phi_{2}T\phi_{1}^{-1}(x)\notin \clu G$. Hence there are no fixed points of $T'$ outside $\clu G\cap T^{-1}(\clu G)$.
	
	Let $T'$ be extended to the whole of $\real^{2}$ as follows. Let $z$ be a point of $\real^{2}\setminus\clu \Delta_{1}$. Then $z$ is expressible uniquely as $\pi(z)+\rho(z)\mu_{z}$, where $\pi\colon \plane\setminus\Delta_{1}\to\fr \Delta_{1}$ is projection along lines crossing the centre of plane $O$ onto $\fr\Delta_{1}$, $\mu_{z}$ is the unit vector in the direction $Oz$, and $\rho(z)>0$. We define $T'(z)=T'(\pi(z))+\frac{3\rho(z)}{2}\mu_{T'(z)}$. Note that convexity of $\clu\Delta_{1}$ and $\clu\Delta_{2}$ implies that $\pi$ and $\rho$ are well defined and extension $T'$ preserves orientation, see Figure \ref{fig:drawing}. Since for every $z\in\real^{2}\setminus\Delta_{1}$ transformation $\frac{3\rho(z)}{2}\mu_{z}\mapsto\frac{3\rho(z)}{2}\mu_{T'(z)}$ is a bijection, as well as $T'|_{\Delta_{1}}$, we conclude that $T'$ is a homeomorphism of $\real^{2}$. Suppose $T'$ has a fixed point $z=T'(z)\in \plane\setminus \clu \Delta_{1}$. Then the directions from $O$ to $z$ and to $T'(z)$ are the same, hence $\pi(z)=T'(\pi(z))$. Consequently, $\mu_{z}=\mu_{T'(z)}$ and from $T'(\pi(z))+\frac{3\rho(z)}{2}\mu_{z}=\pi(z)+\rho(z)\mu_{T'(z)}$ by subtraction we obtain $\frac{\rho(z)}{2}\mu_{z}=0$, but $\rho(z)>0$, a contradiction. Hence $T'(z)\neq z$, and $T'$ is an orientation preserving homeomorphism of the plane without fixed points in $\plane\setminus \clu G\cap T^{-1}(\clu G)$. We can finally define a bijective orientation preserving extension $H$ of $h|_{\clu G'\cap h^{-1}(\clu G')}$ to the entire plane with no fixed points in $\plane\setminus \clu G'\cap h^{-1}(\clu G')=\plane\setminus (R\cap h^{-1}(R))$ by the formula $H=f^{-1}\circ T'\circ f$.
\end{proof}
\section{Proofs of the fixed point theorem}
As we have mentioned, we will prove Theorem \ref{main} using methods from \cite{hamilton_1954}.
\begin{proof}[First proof of Theorem {\ref{main}}]
	Let $ C $ be a component of $ X ∩ h(X) $ that contains forward or backward orbit of some point. Let $K_{n}$ for $n\in\nat$ be a descending sequence of closed discs, such that $\bigcap_{n\in\nat}K_{n}=X$ and $K_{n}\subset \inter K_{n-1}$. By Lemma \ref{keylemma} there exists a descending sequence $\{R_{n}\}_{n\in\nat}$ of closed discs such that $C= \bigcap_{n\in\nat}R_{n}$, $R_{n}\subset K_{n}\cap h(K_{n})$. If $R_{n}$ is forward or backward invariant, then by Brouwer fixed point theorem we can find a fixed point $w_{n}\in R_{n}$. If $R_{n}$ is neither forward nor backward invariant, then by Lemma \ref{lemma-main} we extend $h|_{R_{n}\cap h^{-1}(R_{n})}$ to $H_{n}$ coinciding with $h$ on $R_{n}\cap h^{-1}(R_{n})$. Then $H_{n}$ will have bounded orbit and by Brouwer Translation Theorem \cite[p. 45, Thm. 8]{Brouwer1912} it must have a fixed point in $R_{n}\cap h^{-1}(R_{n})$. By compactness of components of $X ∩ h(X)$ there exists an accumulation point $w$ of $\{w_{n}\}_{n\in\nat}$, which is also a fixed point of $h$. Note that $w\in \bigcap_{n\in\nat}R_{n}\cap h^{-1}(R_{n})\subset C$. It is a consequence of the fact that $\{w_{k}\}_{k\geq n}\subset R_{n}\cap h^{-1}(R_{n})$ and so $w\in R_{n}\cap h^{-1}(R_{n})$ for every $n\in\nat$.
\end{proof}
Let us now proceed to the proof of \ref{main} that is based on the result of Ostrovski.
\begin{proof}[Second proof of Theorem {\ref{main}}]
	We begin similarly to the proof of \ref{main}. Let $ C $ be a component of $ X ∩ h(X) $ and assume that $\orbp{c}\subset C$ for some $c\in C$. Let $K_{n}$ for $n\in\nat$ be a descending sequence of closed discs, such that $\bigcap_{n\in\nat}K_{n}=X$ and $K_{n}\subset \inter K_{n-1}$. By Lemma \ref{keylemma} there exists a descending sequence $\{R_{n}\}_{n\in\nat}$ of closed discs such that $C= \bigcap_{n\in\nat}R_{n}$ and $R_{n}\subset K_{n}\cap h(K_{n})$. 
	
	We will find a sequence of fixed points $\{w_{n}\}$ such that $w_{n}\in R_{n}$ for every $n\in\nat$. Component $C$ contains a forward orbit, in other words there exists a point, that does not leave $C$. Note that orientation preserving homeomorphism on the closed disc is isotopic to the identity, hence by Theorem \ref{thmD} for every $n\in\nat$ we can find a fixed point $w_{n}\in K_{n+1}\cap h(K_{n+1})\subset \inter K_{n}\cap h(\inter K_{n})$ such that $w_{n}$ is contained in the component $\hat{R}$ of $K_{n+1}\cap h(K_{n+1})$ which contains $C$. Note that $\hat{R}$ will be contained in the component of $\inter K_{n}\cap h(\inter K_{n})$, which contains $C$, that is $R_{n}$, hence $w_{n}\in \hat{R}\subset R_{n}$. Let $w$ be an accumulation point of $\{w_{n}\}$. Note that $w\in\bigcap_{n\in\nat}R_{n}=C$. 
	
	Now, let us consider what happens in the case $\orbm{c}\subset C$ for some $c\in C$. Note that any homeomorphism transforms connected components into connected components. Therefore $h^{-1}(C)$ is a connected component of $X\cap h^{-1}(X)$, which contains $\orbpa{h^{-1}(c)}{h^{-1}}$. We can now use the case of forward orbit to obtain a fixed point $w$ in $h^{-1}(C)$. Then $w=h(w)\in C$.
\end{proof}
At last we present proof of a variation of Theorem \ref{main}, that uses the notion of linked orbits.

\begin{proof}[Proof of the Theorem \ref{main2}]
	Let $h\colon \plane\to\plane$ be an orientation preserving homeomorphism and $X\subset\plane$ an acyclic continuum. Let $K_{n}$ for $n\in\nat$ be a descending sequence of closed discs, such that $\bigcap_{n\in\nat}K_{n}=X$ and $K_{n}\subset \inter K_{n-1}$. Consider $C\subset X\cap h(X)$ a connected component containing a $k$-periodic orbit $\Oee$, for some $k>1$. By Lemma \ref{keylemma} there exists a descending sequence $\{R_{n}\}_{n\in\nat}$ of closed discs such that $C= \bigcap_{n\in\nat}R_{n}$ and $R_{n}\subset K_{n}\cap h(K_{n})$. Moreover, we have also $R_{n}\subset \inter R_{n-1}$. It is a consequence of the inequality $R_{n}\subset \inter K_{n-1}\cap \inter h(K_{n-1})$ and the fact, since $R_{n}$ is connected and $R_{n}\subset R_{n-1}$, that $\inter R_{n-1}$ is a component of $\inter K_{n-1}\cap \inter h(K_{n-1})$. 
	
	We first consider the case when there is a subsequence of $\{K_{n}\}$ that has all elements either backward or forward invariant. Then either $X$ is forward invariant and $C=h(X)$ or $X$ is backward invariant and $C=X$. We will deal only with the case of forward invariance, as the other case is dealt by replacing $h$ with $h^{-1}$. Such an approach is valid, as a point is $h$-fixed if and only if it is $h^{-1}$-fixed and $h$-linked if and only if $h^{-1}$-linked. By Theorem \ref{kolev} we can find a fixed point $w$ linked to $\Oee$, suppose $w\notin K_{1}$. By continuity of $h$ there is $n_{0}\in\nat$ with $ K_{n_{0}}\cup h(K_{n_{0}})\subset \inter K_{1}$. Then $\inter K_{1}\setminus C$ is an open surface and $\fr K_{n_{0}}$, $\fr h(K_{n_{0}})$ are homotopic in $\inter K_{1}\setminus C$. Hence by Theorem 2.1 in \cite{Curves} they are isotopic. Now, $D_{1}=K_{n_{0}}$ and some small enough closed ball $D_{2}$ around $w$ satisfy Definition \ref{unlinked-orbits} of unlinked orbits, contradicting choice of $w$.
	
	Let us now assume that $\{K_{n}\}$ does not contain forward or backward invariant elements. By Lemma \ref{lemma-main} applied to discs $R_{n}$ and $K_{n}$ we obtain homeomorphisms $H_{n}$ with $H_{n}|_{R_{n}\cap h^{-1}(R_{n})}=h|_{R_{n}\cap h^{-1}(R_{n})}$. For every $n\in\nat$, we then use Theorem \ref{kolev}, to find a fixed point $w_{n}\in R_{n}\cap h^{-1}(R_{n})$ of $H_{n}$ linked to $\Oee$. It is necessary that $w_{n}\in R_{n}\cap h^{-1}(R_{n})$, as $H_{n}$ has no fixed points outside $R_{n}\cap h^{-1}(R_{n})$. We can assume that $\{w_{n}\}$ is convergent and $w=\lim_{n\to\infty}w_{n}$. Note that $w\in C$ and, as $H_{n}$ are extensions of $h|_{C\cap h^{-1}(C)}$, $w$ is a fixed point of $h$. We claim that $w$ is linked to $\Oee$. Let us take closed discs $D_{1}$ and $D_{2}$ as in Definition \ref{unlinked-orbits} with $\Oee\subset \inter D_{1}$, $w\in \inter D_{2}$ and empty intersection. By hypothesis, $\fr D_{1}$ is freely isotopic to $h(\fr D_{1})$ in $\plane\setminus(\Oee\cup\{w\})$. Let us denote this isotopy by $J\colon \mathbb S^{1}\times [0,1]\to \plane$. Let $n_{0}\in\nat$ be such that $w_{n_{0}}\notin J(\mathbb S^{1}\times [0,1])\cup D_{1}\cup h(D_{1})$. We will show that $\fr D_{1}$ is freely isotopic to $H_{n_{0}}(\fr D_{1})$ in $\plane\setminus(\Oee\cup\{w_{n_{0}}\})$. This will lead to contradiction, as it is always possible to choose small enough disc $D'_{2}$ around $w_{n_{0}}$ that has boundary freely isotopic to its image under $H_{n_{0}}$ in $\plane\setminus(\Oee\cup\{w_{n_{0}}\})$.
	
	We below present similar argument to \cite[Claim 2.2]{linked}. Let $S=\plane\setminus(\Oee\cup\{w_{n_{0}}\})$. The surface $S$ is open and connected. 
	We will show that $h(\fr D_{1})$ and $H_{n_{0}}(\fr D_{1})$ are homotopic in $S$. Put $P=\inter R_{n_{0}}\cap h^{-1}(\inter R_{n_{0}})$. Note that $h(\fr D_{1})\setminus h(P)$ and $H_{n_{0}}(\fr D_{1})\setminus h(P)$ are disjoint unions of families of arcs without self-intersections. Let us define those families respectively by $\{\alpha_{i}\}_{i\in I\subset \nat}$ and $\{\beta_{i}\}_{i\in I'\subset \nat}$, where $\alpha_{i}$ and $\beta_{i}$ are simple curves. It is possible that $I$ or $I'$ is infinite. As $h|_{\clu P}=H_{n_{0}}|_{\clu P}$, for every $\alpha_{i}$ we can find $\beta_{i'}$ such that they have the same endpoints. Rearranging order of curves, we may assume that the ends of $\alpha_{i}$ and $\beta_{i}$ coincide and $I=I'$. Note that $\alpha_{i}\cup\beta_{i}$ may not be a Jordan curve. Let $F$ be the closure of the union of bounded connected components of $\plane\setminus(\alpha_{i}\cup\beta_{i})$ together with $\alpha_{i}\cup\beta_{i}$. Note that $F$ does not contain any point from $\Oee$, as $\Oee\subset C\cap h^{-1}(C)\subset h(P)$. Moreover, $w_{n_{0}}\notin F$. Indeed, as $F\subset h(D_{1})\cup H_{n_{0}}(D_{1})$, $w_{n_{0}}\in F$ is only possible if $w_{n_{0}}\in h(D_{1})$ or $w_{n_{0}}\in H_{n_{0}}(D_{1})$. As $w_{n_{0}}$ is fixed under $h$ and $H_{n_{0}}$, it would mean that $w_{n_{0}}\in D_{1}$, contradiction with the choice of $w_{n_{0}}$. Consequently, $\alpha_{i}$ is homotopic with $\beta_{i}$ in $F$ by a homotopy that fixes endpoints. Once again, as $h$ and $H_{n_{0}}$ coincide on $\clu P$, $h(\fr D_{1})\cap h(\clu P)=H_{n_{0}}(\fr D_{1})\cap h(\clu P)$ and $h(\fr D_{1})$ and $H_{n_{0}}(\fr D_{1})$ are homotopic in $S$. Consequently, $\fr D_{1}$ is homotopic to $H_{n_{0}}(\fr D_{1})$ in $S$. They are isotopic by Theorem 2.1 in \cite{Curves}, leading to a contradiction with the fact that $\Oee$ and $w_{n_{0}}$ are $H_{n_{0}}$-unlinked.
\end{proof}
\section{Acknowledgments}
This work was supported by the National Science Centre, Poland (NCN), ~grant no. 2019/34/E/ST1/00237. We would like to thank Jan P. Boroński for his constant support and many invaluable comments. 
\bibliography{fixed-point-qstn_v1}
\bibliographystyle{alpha}

\end{document}